\newcommand{\setA}{\mathscr{A}}
\newcommand{\setB}{\mathscr{B}}
\newcommand{\setD}{\mathscr{D}}
\newcommand{\setM}{\mathscr{M}}
\let\emptyset\varnothing
\newtheorem{theorem}{Theorem}
\newtheorem{lemma}[theorem]{Lemma}
\newtheorem*{theorexp}{Theorem A}
\newtheoremstyle{myremark}
{}% Space above (empty = current defaults are used)
{}% Space below (empty = current defaults are used)
{\rmfamily}% Body font
{}% Indent amount (empty = no indent)
{\bfseries}%    Theorem head font
{.}%            Punctuation after theorem head
{.5em}%         Space after theorem head
{}%             Theorem head spec (can be left empty, meaning `normal')
\theoremstyle{myremark}
\newtheorem{problem}[theorem]{Question}
\newtheorem{remark}[theorem]{Remark}
\newtheorem{definition}[theorem]{Definition}
\begin{document}
\date{}

\title{On convergence of infinite matrix products\\ with alternating factors from two sets of matrices}

\author{Victor Kozyakin\thanks{Kharkevich Institute for Information Transmission Problems, Russian Academy of Sciences, Bolshoj Karetny lane 19, Moscow 127051, Russia, e-mail: kozyakin@iitp.ru}}

\maketitle

\begin{abstract}
We consider the problem of convergence to zero of matrix products $A_{n}B_{n}\cdots A_{1}B_{1}$ with factors from two sets of matrices, $A_{i}\in\setA$ and $B_{i}\in\setB$, due to a suitable choice of matrices~$\{B_{i}\}$. It is assumed that for any sequence of matrices $\{A_{i}\}$ there is a sequence of matrices~$\{B_{i}\}$ such that the corresponding matrix products $A_{n}B_{n}\cdots A_{1}B_{1}$ converge to zero. We show that, in this case, the convergence of the matrix products under consideration is uniformly exponential, that is, $\|A_{n}B_{n}\cdots A_{1}B_{1}\|\le C\lambda^{n}$, where the constants $C>0$ and $\lambda\in(0,1)$ do not depend on the sequence $\{A_{i}\}$ and the corresponding sequence $\{B_{i}\}$. Other problems of this kind are discussed and open questions are formulated.
\medskip

\noindent\textbf{Keywords:} infinite matrix products, alternating factors,
convergence
\medskip

\noindent\textbf{AMS Subject Classification:} 40A20; 39A22; 93B05; 93C55
\end{abstract}

\section{Introduction}
Denote by $\setM(p,q)$ the space of matrices of dimension $p\times q$ with real elements and the topology of elementwise convergence. Let $\setA\subset\setM(N,M)$ and $\setB\subset\setM(M,N)$ be finite sets of matrices. We will be interested in the question of whether it is possible to ensure the convergence to zero of matrix products,
\begin{equation}\label{E:matprod}
A_{n}B_{n}\cdots A_{1}B_{1},\qquad A_{i}\in\setA,~B_{i}\in\setB,
\end{equation}
for all possible sequences of matrices $\{A_{i}\}$, due to a suitable choice of sequences of matrices $\{B_{i}\}$.

As an example of a problem in which such a question arises, let us consider one of the varieties of the stabilizability problem for discrete-time switching linear systems~\cite{Stanford:SIAMJCO79,Koz:AiT90:10:e,SunGe05,JunMas:SIAMJCO17,LinAnt:IEEETAC09}. Consider a system whose dynamics is described by the equations
\begin{equation}\label{E:switchsys}
\begin{aligned}
x(n)&=A_{n}u(n),\qquad &A_{n}\in\setA,\\
u(n)&=B_{n}x(n-1),\qquad &B_{n}\in\setB,
\end{aligned}
\end{equation}
where the first of them describes the functioning of a plant, whose properties uncontrollably affected by perturbations from the class $\setA$, while the second equation describes the behavior of a controller. Then, by choosing a suitable sequence of controls $\{B_{n}\in\setB\}$, one can try to achieve the desired behavior of system~\eqref{E:switchsys}, for example, the convergence to zero of its solutions:
\[
x(n)=A_{n}B_{n}\cdots A_{1}B_{1}x(0).
\]

As was noted, for example, in~\cite{ACDDHK:STACS15,BMRLL:AI16}, the question of the stabilizability of matrix products with alternating factors from two sets, due to a special choice of factors from one of these sets, can also be treated in the game-theoretic sense.

If, in considering the switching system, it is assumed that there are actually no control actions, that is, $B_{n}\equiv I$, then~\eqref{E:switchsys} take the form
\begin{equation}\label{E:stabsys}
x(n)=A_{n}x(n-1),\qquad A_{n}\in\setA.
\end{equation}
In this case, the problem of the stabilizability of the corresponding switching system  turns into the problem of its stability for all possible perturbations of the plant in class $\setA$, that is, into the problem of convergence to zero of the solutions
\[
x(n)=A_{n}\cdots A_{1}x(0)
\]
of~\eqref{E:stabsys} for all possible sequences of matrices $\{A_{i}\in\setA\}$. Convergence to zero of the matrix products $A_{n}\cdots A_{1}$, arising in this case, has been investigated by many authors (see, e.g.,~\cite{Koz:AiT90:10:e,DaubLag:LAA92,BerWang:LAA92,Gurv:LAA95,Hartfiel:02} as well as the bibliography in~\cite{Koz:IITP13}).

The presence of alternating factors in the products of matrices~\eqref{E:matprod} substantially complicates the problem of convergence of the corresponding matrix products for all possible sequences of matrices $\{A_{i}\in\setA\}$ due to a suitable choice of sequences of matrices $\{B_{i}\in\setB\}$ in comparison with the problem of convergence of matrix products $A_{n}\cdots A_{1}$ for all possible sequences of matrices $\{A_{i}\in\setA\}$. A discussion of the arising difficulties can be found, for example, in~\cite{Koz:DCDSB19}. One of the applications of the results obtained in this paper for analyzing the new concept of the so-called minimax joint spectral radius is also described there.

\section{Path-Dependent Stabilizability}
Every product~\eqref{E:matprod} is a matrix of dimension $N\times N$; that is, it is an element of the space $\setM(N,N)$. As is known, the space $\setM(N,N)$ with the topology of elementwise convergence is normable; therefore we assume that $\|\cdot\|$ is some norm in it. We note here that since all norms in the space $\setM(N,N)$ are equivalent, the choice of a particular norm when considering the convergence of products~\eqref{E:matprod} is inessential. Nevertheless, in what follows, it will be convenient for us to assume that the norm $\|\cdot\|$ in $\setM(N,N)$ is submultiplicative; that is, for any two matrices $X,Y$, the inequality $\|XY\|\le\|X\|\cdot\|Y\|$ holds. In particular, a norm on $\setM(N,N)$ is submultiplicative if it is generated by some vector norm on $\mathbb{R}^{N}$; that is, its value on matrix $A$ is defined by the equality $\|A\|=\sup_{x\neq0}\frac{\|Ax\|}{\|x\|}$, where $\|x\|$ and $\|Ax\|$ are the norms of the corresponding vectors in $\mathbb{R}^{N}$.

\begin{definition}\label{D:PDS}
The matrix products~\eqref{E:matprod} are said to be \emph{path-dependent stabilizable} by choosing the factors $\{B_{n}\}$ if for any sequence of matrices $\{A_{n}\in\setA\}$ there exists a sequence of matrices $\{B_{n}\in\setB\}$ for which
\begin{equation}\label{E:converge}
\|A_{n}B_{n}\cdots A_{1}B_{1}\|\to0\quad\text{as}\quad n\to\infty.
\end{equation}
\end{definition}

As an example, consider the case where sets $\setA$ and $\setB$ consist of square matrices of dimension $N\times N$, and $\setB=\{I\}$, where $I$ is the identical matrix. In this case, Definition~\ref{D:PDS} of the path-dependent stabilizability of the matrix products~\eqref{E:matprod} reduces to the following condition:
\begin{equation}\label{E:convergeA}
\|A_{n}\cdots A_{1}\|\to0\quad\text{as}\quad n\to\infty,
\end{equation}
for each sequence $\{A_{n}\in\setA\}$. As is known, in this case, convergence~\eqref{E:convergeA} is uniformly exponential. Namely, the following statement, which was repeatedly `discovered' by many authors, is true (see, e.g.,~\cite{Koz:AiT90:10:e,DaubLag:LAA92,BerWang:LAA92,Gurv:LAA95,Hartfiel:02}).

\begin{theorexp}[on exponential convergence]\label{T:expconv}
Let the set of matrices $\setA$ be such that for each sequence $\{A_{n}\in\setA\}$ the convergence~\eqref{E:convergeA} holds. Then there exist constants $C>0$ and $\lambda\in(0,1)$ such that
\[
\|A_{n}\cdots A_{1}\|\le C\lambda^{n},\qquad n=1,2,\ldots\,,
\]
for each sequence $\{A_{n}\in\setA\}$.
\end{theorexp}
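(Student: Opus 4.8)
The plan is to pass from the pointwise hypothesis to a uniform one by a compactness argument, and then to read off the exponential rate from submultiplicativity. Set $\hat\rho_n=\max\{\|A_n\cdots A_1\|:A_i\in\setA\}$, a genuine maximum since $\setA$ is finite. Submultiplicativity of the norm gives $\hat\rho_{m+n}\le\hat\rho_m\hat\rho_n$, so by Fekete's lemma the limit $\rho=\lim_n\hat\rho_n^{1/n}=\inf_n\hat\rho_n^{1/n}$ exists. If $\rho<1$ we are done: choosing $\lambda\in(\rho,1)$ we have $\hat\rho_n<\lambda^n$ for all large $n$, and absorbing the finitely many remaining terms into a constant $C$ yields $\|A_n\cdots A_1\|\le\hat\rho_n\le C\lambda^n$ for every sequence. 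Thus everything reduces to proving $\hat\rho_n\to0$ (equivalently $\rho<1$); I will argue by contradiction, producing from the assumption $\hat\rho_n\ge1$ for all $n$ a single sequence $\{A_i\in\setA\}$ that violates~\eqref{E:convergeA}.

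First I would establish that the whole semigroup of products is uniformly bounded: there is $M$ with $\|A_n\cdots A_1\|\le M$ for all $n$ and all choices. Suppose not, and pick (using an operator norm, as we may) products $P_j=A^{(j)}_{m_j}\cdots A^{(j)}_1$ and unit vectors $x_j$ with $\|P_jx_j\|\to\infty$. Following the trajectory $y^{(j)}_k=A^{(j)}_k\cdots A^{(j)}_1x_j$, let $t_j$ be the last time its norm is $\le1$ and restart the clock there; the resulting escaping trajectories start with norm in $(1/R,1]$, where $R=\max_{A\in\setA}\|A\|$, stay above $1$, and climb arbitrarily high. The key observation is that, because a single factor multiplies the norm by at most $R$, the escape time must tend to infinity. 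A diagonal extraction over $j$ (legitimate since $\setA$ is finite) then produces a fixed sequence $D_1,D_2,\dots\in\setA$ and a fixed nonzero vector $u_*$ with $\|D_i\cdots D_1u_*\|\ge1$ for every $i$, contradicting $\|D_i\cdots D_1\|\to0$. Hence the products are bounded.

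With boundedness in hand the decay step is short. If $\hat\rho_n\not\to0$, fix $\epsilon>0$ and lengths $n_j\to\infty$ with $\hat\rho_{n_j}\ge\epsilon$, realized by products $P_{n_j}$. The bound $M$ forces every initial segment of $P_{n_j}$ to be large: from $\epsilon\le\|A_{n_j}\cdots A_{k+1}\|\,\|A_k\cdots A_1\|\le M\|A_k\cdots A_1\|$ we get $\|A_k\cdots A_1\|\ge\epsilon/M$ for all $k\le n_j$. Consequently the finite words all of whose initial products have norm $\ge\epsilon/M$ form a prefix-closed, finitely branching tree containing words of every length $n_j$; by König's lemma it has an infinite branch $A_1,A_2,\dots$ along which $\|A_k\cdots A_1\|\ge\epsilon/M>0$ for all $k$, again contradicting~\eqref{E:convergeA}. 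Therefore $\hat\rho_n\to0$, and the reduction above completes the proof.

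The step I expect to be the main obstacle is the uniform boundedness of the products. It is exactly here that the finiteness of $\setA$ is used in an essential way — both to bound the single-step amplification $R$ (which forces the escape time to grow) and to perform the coordinatewise stabilization in the diagonal extraction. This is also the point that hides the delicate critical regime $\rho=1$: one must rule out trajectories that repeatedly escape and return, and it is the growth of the escape time together with compactness that turns such behaviour into an honest nonconvergent product.
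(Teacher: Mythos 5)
Your proof is correct, but it takes a genuinely different route from the paper. Strictly speaking, the paper never proves this theorem at all --- it is quoted as a known result with references --- but the paper's own machinery, Lemma~\ref{L:uniconv} together with the proof of Theorem~\ref{T:main} specialized to $\setB=\{I\}$, amounts to an in-paper proof, and it runs differently. The paper applies compactness (Tikhonov's theorem on $\setA^{\infty}$, via nested nonempty closed sets of ``bad'' sequences) directly to the pointwise hypothesis, obtaining uniform constants $k_{*}$ and $\mu\in(0,1)$ such that \emph{every} sequence has an initial segment of length at most $k_{*}$ whose product has norm at most $\mu$; the exponential bound then follows by cutting an arbitrary product into such contracting blocks plus a short remainder. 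You instead aim at the stronger uniform statement $\hat\rho_{n}\to0$ for the maximal norm, and this is what forces your extra step --- uniform boundedness of the whole semigroup of products, proved by the escape-time/diagonal argument --- after which K\"onig's lemma (which, for a finite alphabet, is the same compactness principle as the Tikhonov argument) and Fekete's lemma finish the job. The trade-off is as follows. The paper's route needs no boundedness lemma and transfers verbatim to the two-set stabilizability setting of Theorem~\ref{T:main}, whereas your route does not obviously survive there: with alternating chosen factors there is no single semigroup to bound, and a two-set analogue of your boundedness step is closely related to the paper's open Question~2. Conversely, your route yields sharper information in the present setting: it shows that the maximum $\hat\rho_{n}$ over all products of length $n$ tends to zero, and that any $\lambda$ exceeding the joint spectral radius $\rho=\lim_{n\to\infty}\hat\rho_{n}^{1/n}$ serves as the rate, i.e.\ you identify the optimal exponent, which the paper's $\lambda=\mu^{1/k_{*}}$ does not; this is essentially the classical argument from the literature the paper cites. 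One presentational point: in the diagonal extraction you should state explicitly that the restarted initial vectors lie in the compact annulus $\{x:1/R\le\|x\|\le1\}$, so that a convergent subsequence with nonzero limit $u_{*}$ exists; this is clearly what you intend, and the step is otherwise sound.
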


Our goal is to prove that an analogue of Theorem A (on exponential convergence) is valid for the path-dependent stabilizable matrix products~\eqref{E:matprod}.

\begin{theorem}\label{T:main}
Let $\setA$ and $\setB$ be the sets of matrices for which the matrix products~\eqref{E:matprod} are path-dependent stabilizable. Then there exist constants $C>0$ and $\lambda\in(0,1)$ such that for any sequence of matrices $\{A_{n}\in\setA\}$ there is a sequence of matrices $\{B_{n}\in\setB\}$ for which
\begin{equation}\label{E:convergeE}
\|A_{n}B_{n}\cdots A_{1}B_{1}\|\le C\lambda^{n},\qquad n=1,2,\ldots\,.
\end{equation}
\end{theorem}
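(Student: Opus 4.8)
The plan is to reduce the statement to a uniform \emph{finite-horizon} contraction property and then to obtain the exponential estimate by a block (concatenation) argument, exactly as in the proof of the Theorem on Exponential Convergence; the only genuinely new ingredient is a compactness step that copes with the fact that the stabilizing sequence $\{B_i\}$ is allowed to depend on the \emph{entire} sequence $\{A_i\}$.

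First I would fix the threshold $\tfrac12$ and call a finite word $u=(A_1,\ldots,A_m)\in\setA^{m}$ \emph{good} if there exist $B_1,\ldots,B_m\in\setB$ with $\|A_mB_m\cdots A_1B_1\|<\tfrac12$. The crucial observation is that, although the stabilizing controls for a given infinite sequence $\alpha=\{A_i\}$ may depend on all of $\alpha$, the quantity $\|A_mB_m\cdots A_1B_1\|$ depends only on the first $m$ entries of $\alpha$ and on $B_1,\ldots,B_m$; hence ``having a good prefix'' is a property of a finite initial segment. Path-dependent stabilizability then yields, for every $\alpha$, a stabilizing $\{B_i\}$ and an index $m(\alpha)$ with $\|A_{m(\alpha)}B_{m(\alpha)}\cdots A_1B_1\|<\tfrac12$, so the length-$m(\alpha)$ prefix of $\alpha$ is good. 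Thus every $\alpha$ has a good prefix. Since $\setA$ is finite, $\setA^{\mathbb{N}}$ is compact and the cylinders determined by good words form an open cover; extracting a finite subcover $u^{(1)},\ldots,u^{(r)}$ and setting $m^{*}=\max_i|u^{(i)}|$, I obtain (equivalently, by K\"onig's lemma) a uniform horizon: every word of length $\ge m^{*}$ has some good word of length $\le m^{*}$ as a prefix.

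Next I would build, for an arbitrary adversary sequence $\{A_i\}$, a stabilizing $\{B_i\}$ by peeling off consecutive good blocks. Starting at position $0$, the first $m^{*}$ letters contain a good prefix of some length $\ell_1\le m^{*}$; I assign the corresponding controls, obtaining a block of norm $<\tfrac12$, advance by $\ell_1$, and repeat on the shifted sequence. Since the sequence is infinite this defines $\{B_i\}$ for all $i$ (and does so causally). For any $n$, writing the length-$n$ product as the product of the $k$ complete good blocks contained in $[1,n]$ times a remainder of length $<m^{*}$, submultiplicativity of $\|\cdot\|$ gives $\|A_nB_n\cdots A_1B_1\|\le M^{m^{*}}(\tfrac12)^{k}$ with $M=\max_{A\in\setA,\,B\in\setB}\|AB\|$; since $k>(n-m^{*})/m^{*}$ this yields $\|A_nB_n\cdots A_1B_1\|\le C\lambda^{n}$ with $\lambda=(\tfrac12)^{1/m^{*}}\in(0,1)$ and a constant $C$ independent of the two sequences.

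I expect the main obstacle to be the compactness step of the second paragraph: the hypothesis supplies, for each $\alpha$ separately, only a non-causal and a priori global stabilizing control, and the real work is to convert this pointwise information into the single uniform horizon $m^{*}$. The key that unlocks it is the localization remark, namely that contraction below $\tfrac12$ is witnessed on a finite prefix and is therefore a clopen condition on $\setA^{\mathbb{N}}$; once this is in place, finiteness of $\setA$ (compactness, or K\"onig's lemma) does the rest, and the subsequent block-peeling and exponential estimate are routine and parallel the single-set case.
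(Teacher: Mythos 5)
Your proposal is correct and takes essentially the same route as the paper: compactness of the sequence space $\setA^{\infty}$ (you phrase it as extracting a finite subcover from the cylinders of good words, the paper as a nonempty intersection of nested closed sets) yields a uniform contraction horizon, which is exactly the paper's Lemma~1, and the subsequent block-peeling with submultiplicativity is identical to the paper's proof of the theorem. Two cosmetic differences: fixing the threshold $\tfrac12$ at the outset lets you skip the paper's step that upgrades the strict inequality $<1$ to a uniform $\mu<1$ using finiteness of the set of short products, and in the remainder estimate you should replace $M^{m^{*}}$ by $\max\{1,M\}^{m^{*}}$ (the paper's $\varkappa$) so the bound also holds in the degenerate case $M<1$.
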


To prove the theorem, we need the following auxiliary assertion.

\begin{lemma}\label{L:uniconv}
Let the conditions of Theorem~\ref{T:main} be satisfied. Then there exist constants $k_{*}>0$ and $\mu\in(0,1)$ such that for any sequence of matrices $\{A_{n}\in\setA\}$ there is a positive integer $k\le k_{*}$ and a set of matrices $B_{1},\ldots,B_{k}\in\setB$ for which $\|A_{k}B_{k}\cdots A_{1}B_{1}\|\le\mu<1$.
\end{lemma}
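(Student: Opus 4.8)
The plan is to deduce the uniform bound $k_{*}$ from a compactness argument on the space of admissible sequences, exploiting that $\setA$ and $\setB$ are \emph{finite}. The only genuinely nontrivial content of the lemma is uniformity: for a single sequence $\{A_{n}\}$, path-dependent stabilizability already guarantees via Definition~\ref{D:PDS} that the product $A_{n}B_{n}\cdots A_{1}B_{1}$, for a suitable $\{B_{n}\}$, eventually drops below any prescribed threshold in $(0,1)$, so some finite $k$ with $\|A_{k}B_{k}\cdots A_{1}B_{1}\|\le\tfrac12$ certainly exists. What must be shown is that one such $k$ can always be chosen below a bound $k_{*}$ that does not depend on the sequence $\{A_{n}\}$.

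First I would fix the target value $\mu=\tfrac12$ and, for each finite string $A_{1},\ldots,A_{k}\in\setA$, introduce the quantity
\[
m_{k}(A_{1},\ldots,A_{k})=\min_{B_{1},\ldots,B_{k}\in\setB}\|A_{k}B_{k}\cdots A_{1}B_{1}\|,
\]
which is well defined because $\setB$ is finite, and which depends only on the first $k$ factors of a sequence. Equipping the set $\Sigma=\setA^{\mathbb{N}}$ of all sequences $\{A_{n}\in\setA\}$ with the product topology makes it compact, since $\setA$ is a finite (hence compact discrete) set and a countable product of compact spaces is compact. For each $k$ the set $U_{k}=\{\{A_{n}\}\in\Sigma:\,m_{k}(A_{1},\ldots,A_{k})\le\tfrac12\}$ depends on finitely many coordinates and is therefore clopen in $\Sigma$.

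The key step is then to observe that $\{U_{k}\}_{k\ge1}$ covers $\Sigma$: given any $\{A_{n}\}\in\Sigma$, Definition~\ref{D:PDS} supplies a sequence $\{B_{n}\}$ with $\|A_{n}B_{n}\cdots A_{1}B_{1}\|\to0$, so $\|A_{k}B_{k}\cdots A_{1}B_{1}\|\le\tfrac12$ for some $k$, whence $m_{k}(A_{1},\ldots,A_{k})\le\tfrac12$ and $\{A_{n}\}\in U_{k}$. By compactness finitely many of the $U_{k}$ already cover $\Sigma$; letting $k_{*}$ be the largest index in such a finite subcover, every sequence $\{A_{n}\}$ lies in some $U_{k}$ with $k\le k_{*}$, which is exactly the assertion of the lemma with $\mu=\tfrac12$.

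I expect the main obstacle to be making the passage to uniformity rigorous rather than computational, and it is precisely here that compactness (equivalently, finiteness of the matrix sets) does the work. An equivalent route, which I would keep in reserve if the topological language is to be avoided, is a direct contradiction: were no such $k_{*}$ to exist for $\mu=\tfrac12$, then for each $m$ there would be a sequence $\{A_{n}^{(m)}\}$ with $m_{k}(A_{1}^{(m)},\ldots,A_{k}^{(m)})>\tfrac12$ for all $k\le m$; a diagonal extraction over the finitely many values available in $\setA$ produces a limiting sequence $\{A_{n}^{*}\}$ with $m_{k}(A_{1}^{*},\ldots,A_{k}^{*})>\tfrac12$ for \emph{every} $k$, so no choice of $\{B_{n}\}$ can drive the product below $\tfrac12$, contradicting stabilizability. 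Both finiteness hypotheses are essential: finiteness of $\setB$ guarantees that $m_{k}$ is an attained minimum over finitely many choices, while finiteness of $\setA$ underlies both the diagonal extraction and the compactness of $\Sigma$.
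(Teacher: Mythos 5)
Your argument is correct and is essentially the paper's own: both hinge on Tychonoff compactness of $\setA^{\infty}$ in the product topology together with the observation that the relevant condition for each $k$ depends on only finitely many coordinates, so the corresponding sets of sequences are clopen (resp.\ closed). The differences are purely formal — you apply the open-cover formulation of compactness directly to the clopen sets $U_{k}$, while the paper argues by contradiction via nested closed sets and the finite intersection property, and you fix $\mu=\tfrac12$ at the outset, whereas the paper first obtains strict inequalities $<1$ and only afterwards extracts a uniform $\mu<1$ from the finiteness of the collection of products of length at most $k_{*}$.
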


\begin{proof}
By Definition~\ref{D:PDS} of the path-dependent stabilizability of the matrix products~\eqref{E:matprod} for each matrix sequence $\{A_{n}\in\setA\}$ there exists a natural $k$ such that
\begin{equation}\label{E:p1}
\|A_{k}B_{k}\cdots A_{1}B_{1}\|<1,
\end{equation}
for some sequence of matrices $\{B_{n}\in\setB\}$.

Given a sequence $\{A_{n}\}$, let us denote by $k(\{A_{n}\})$ the smallest $k$ under which inequality~\eqref{E:p1} holds. To prove the lemma, it suffices to show that the quantities $k(\{A_{n}\})$ are uniformly bounded, that is, there is a $k_{*}$ such that
\begin{equation}\label{E:p2}
k(\{A_{n}\})\le k_{*},\qquad \forall~\{A_{n}\in\setA\}.
\end{equation}

Assuming that inequality~\eqref{E:p2} is not true, for each positive integer $k$, we can find a sequence $\{A_{n}^{(k)}\in\setA\}$ such that
$k(\{A_{n}^{(k)}\})\ge k$. In this case, by the definition of the number $k(\{A_{n}\})$,
\begin{equation}\label{E:p4}
\|A_{m}^{(k)}B_{m}\cdots A_{1}^{(k)}B_{1}\|\ge 1,\qquad\forall~B_{1},\ldots,B_{m}\in\setB.
\end{equation}
for each positive integer $m\le k-1\le k(\{A_{n}^{(k)}\})-1$.

Let us denote by $\boldsymbol{A}_{k}$ the set of all sequences $\{A_{n}\in\setA\}$, for each of which inequalities~\eqref{E:p4} hold. Then $\{A_{n}^{(k)}\}\in\boldsymbol{A}_{k}$ and, therefore, $\boldsymbol{A}_{k}\neq\emptyset$. Moreover,
\begin{equation}\label{E:p5}
\boldsymbol{A}_{1}\supseteq \boldsymbol{A}_{2}\supseteq\cdots\,,
\end{equation}
and each set $\boldsymbol{A}_{k}$ is closed since inequalities~\eqref{E:p4} hold for all its elements, sequences $\{A_{n}\}\in\boldsymbol{A}_{k}$, for each positive integer $m\le k-1$.

We now note that each of the sets $\boldsymbol{A}_{k}$ is a subset of the topological space $\setA^{\infty}$ of all sequences $\{A_{n}\in\setA\}$ with the topology of infinite direct product of the finite set of matrices $\setA$. By the Tikhonov theorem in this case $\setA^{\infty}$ is a compact. Then, each of the sets $\boldsymbol{A}_{k}$ is also a compact. In this case it follows from~\eqref{E:p5} that $\bigcap_{k=1}^{\infty}\boldsymbol{A}_{k}\neq\emptyset$ and, therefore, there is a sequence $\{\Bar{A}_{n}\in\setA\}$ such that
\[
\{\Bar{A}_{n}\}\in\bigcap_{k=1}^{\infty}\boldsymbol{A}_{k}.
\]
By the definition of the sets $\boldsymbol{A}_{k}$, for the sequence $\{\Bar{A}_{n}\in\setA\}$ the inequalities
\[
\|\Bar{A}_{m}B_{m}\cdots \Bar{A}_{1}B_{1}\|\ge 1
\]
hold for each $m\ge 1$ and any $B_{1},\ldots,B_{m}\in\setB$ which contradicts the assumption of the path-dependent stabilizability of the matrix products~\eqref{E:matprod}. This contradiction completes the proof of the existence of a number $k_{*}$ for which inequalities~\eqref{E:p2} are valid.

Thus, we have proven the existence of a number $k_{*}$ such that, for each sequence$\{A_{n}\in\setA\}$ and some corresponding sequence $\{B_{n}\in\setB\}$, strict inequalities~\eqref{E:p1} are satisfied with $k=k(\{A_{n}\})\le k_{*}$. Moreover, since the number of all products $A_{k}B_{k}\cdots A_{1}B_{1}$ participating in inequalities~\eqref{E:p1} is finite, then the corresponding inequalities~\eqref{E:p1} can be strengthened: there is a $\mu\in(0,1)$ such that for any sequence of matrices $\{A_{n}\in\setA\}$ there exist a natural $k\le k_{*}$ and a set of matrices $B_{1},\ldots,B_{k}\in\setB$ for which $\|A_{k}B_{k}\cdots A_{1}B_{1}\|\le\mu<1$.
\end{proof}

We now proceed directly to the proof of Theorem~\ref{T:main}.

\begin{proof}[Proof of Theorem~\ref{T:main}]
Given an arbitrary sequence $\{A_{n}\in\setA\}$, by Lemma~\ref{L:uniconv}, there exist a number $k_{1}\le k_{*}$ and a set of matrices $B_{1},\ldots,B_{k_{1}}$ such that
\[
\|A_{k_{1}}B_{k_{1}}\cdots A_{1}B_{1}\|\le\mu<1.
\]

Next, consider the sequence of matrices $\{A_{n}\in\setA,~n\ge k_{1}+1\}$ (the ``tail'' of the sequence $\{A_{n}\in\setA\}$ starting with the index $k_{1}+1$). Again, by virtue of Lemma~\ref{L:uniconv}, there exist a $k_{2}\le k_{1}+k_{*}$ and a set of matrices $B_{k_{1}+1},\ldots,B_{k_{2}}$ such that
\[
\|A_{k_{2}}B_{k_{2}}\cdots A_{k_{1}+1}B_{k_{1}+1}\|\le\mu<1.
\]

We continue in the same way constructing for each $m=3,4,\ldots$ numbers
\begin{equation}\label{E:p71}
k_{m}\le k_{m-1}+k_{*}
\end{equation}
and sets of matrices $B_{k_{m-1}+1},\ldots,B_{k_{m}}$ for which
\begin{equation}\label{E:p8}
\|A_{k_{m}}B_{k_{m}}\cdots A_{k_{m-1}+1}B_{k_{m-1}+1}\|\le\mu<1.
\end{equation}

Let us show that, for the obtained sequence of matrices $\{B_{n}\}$ for some $C>0$ and  $\lambda\in(0,1)$, which do not depend on the sequences $\{A_{n}\}$ and $\{B_{n}\}$, inequalities~\eqref{E:convergeE} are valid. Fix a positive integer $n$ and specify for it a number $p=p(n)$ such that
\begin{equation}\label{E:p9}
n-k_{*}<k_{p}\le n.
\end{equation}
Such $p$ exists, since the sequence $\{k_{m}\}$ strictly increases by construction. We now represent the product $A_{n}B_{n}\cdots A_{1}B_{1}$ in the form
\[
A_{n}B_{n}\cdots A_{1}B_{1}=D_{*}D_{p}\cdots D_{1},
\]
where
\begin{align*}
D_{*}&=A_{n}B_{n}\cdots A_{k_{p}+1}B_{k_{p}+1},\\
D_{i}&=A_{k_{i}}B_{k_{i}}\cdots A_{k_{i-1}+1}B_{k_{i-1}+1},\qquad i=1,2,\ldots,p.
\end{align*}

Then
\[
\|D_{*}\|\le \varkappa^{n-k_{p}}\le \varkappa^{k_{*}},\quad\text{where}\quad \varkappa=\max_{A\in\setA, B\in\setB}\{1,\|AB\|\}
\]
(since the sets $\setA$ and $\setB$ are finite, $\varkappa<\infty$). Further, by the definition of the matrices $D_{i}$ and inequalities~\eqref{E:p8},
\[
\|D_{i}\|\le \mu<1\quad\text{for}\quad i=1,2,\ldots,p.
\]
Taking into account the fact that by virtue of~\eqref{E:p71}, for each $m$, the estimate $k_{m}\le k_{*}m$ is fulfilled, from here and from~\eqref{E:p9} we obtain for the number $p$ a lower estimate: $p\ge\frac{n}{k_{*}}-1$. And then from the estimates established earlier for $\|D_{*}\|$, $\|D_{1}\|,\ldots,\|D_{m}\|$ we deduce that
\[
\|A_{n}B_{n}\cdots A_{1}B_{1}\|\le \|D_{*}\|\cdot\|D_{p}\|\cdots\|D_{1}\| \le \varkappa^{k_{*}} \mu^{p}\le \varkappa^{k_{*}} \mu^{\frac{n}{k_{*}}-1}\le\frac{\varkappa^{k_{*}}}{\mu} \left(\mu^{\frac{1}{k_{*}}}\right)^{n}.
\]
Hence, putting $C=\frac{\varkappa^{k_{*}}}{\mu}$, $\lambda= \mu^{\frac{1}{k_{*}}}$, we obtain inequalities~\eqref{E:convergeE}.
\end{proof}

\section{Path-Independent Stabilizability}
Let us now consider another variant of the stabilizability of matrix products~\eqref{E:matprod} due to a suitable choice of matrices~$\{B_{i}\}$.

\begin{definition}\label{D:PIPS}
The matrix products~\eqref{E:matprod} are said to be \emph{path-independent periodically stabilizable} by choosing the factors $\{B_{n}\}$ if there exists a periodic sequence of matrices $\{\Bar{B}_{n}\in\setB\}$ such that
\begin{equation}\label{E:convergeU}
\|A_{n}\Bar{B}_{n}\cdots A_{1}\Bar{B}_{1}\|\to0\quad\text{as}\quad n\to\infty,
\end{equation}
for any sequence of matrices $\{A_{n}\in\setA\}$.
\end{definition}

It is clear that path-independent periodically stabilized products~\eqref{E:matprod} are path-dependent stabilized.

\begin{theorem}\label{T:main2}
Let $\setA$ and $\setB$ be the sets of matrices for which the matrix products~\eqref{E:matprod} are path-independent periodically stabilizable by a sequence of matrices $\{\Bar{B}_{n}\in\setB\}$. Then there exist constants $C>0$ and $\lambda\in(0,1)$ such that
\begin{equation}\label{E:convergeEU}
\|A_{n}\Bar{B}_{n}\cdots A_{1}\Bar{B}_{1}\|\le C\lambda^{n},\qquad n=1,2,\ldots\,,
\end{equation}
for any sequence of matrices $\{A_{n}\in\setA\}$.
\end{theorem}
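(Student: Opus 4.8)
The plan is to exploit the periodicity of $\{\Bar{B}_{n}\}$ in order to reduce the claim to the already-established Theorem on Exponential Convergence. Let $T$ denote the period of the sequence $\{\Bar{B}_{n}\}$, so that $\Bar{B}_{mT+j}=\Bar{B}_{j}$ for all $m\ge0$ and $1\le j\le T$. The first step is to collapse each block of $T$ consecutive factors into a single matrix by introducing the finite set
\begin{equation*}
\setD=\{A_{T}\Bar{B}_{T}\cdots A_{1}\Bar{B}_{1}:A_{1},\ldots,A_{T}\in\setA\},
\end{equation*}
which consists of $N\times N$ matrices and is finite because $\setA$ is finite and $T$ is fixed.

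The key observation is that, for $n=mT$, the product $A_{n}\Bar{B}_{n}\cdots A_{1}\Bar{B}_{1}$ equals a product $D_{m}\cdots D_{1}$ of $m$ matrices from $\setD$, where $D_{i}=A_{iT}\Bar{B}_{T}\cdots A_{(i-1)T+1}\Bar{B}_{1}\in\setD$ after using periodicity; and conversely, every sequence $\{D_{i}\in\setD\}$ arises in this fashion from a suitable sequence $\{A_{n}\in\setA\}$, since the factors $A_{n}$ inside each block may be chosen independently. Hence the hypothesis~\eqref{E:convergeU}, which guarantees $\|A_{n}\Bar{B}_{n}\cdots A_{1}\Bar{B}_{1}\|\to0$ for every $\{A_{n}\}$, translates verbatim into the statement that $\|D_{m}\cdots D_{1}\|\to0$ for \emph{every} sequence $\{D_{i}\in\setD\}$. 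This is precisely the hypothesis of the Theorem on Exponential Convergence applied to the finite set $\setD$, so there exist constants $C'>0$ and $\lambda'\in(0,1)$ with $\|D_{m}\cdots D_{1}\|\le C'(\lambda')^{m}$ for all $m$ and all choices of the $D_{i}$.

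It then remains to treat a general index $n$ that need not be a multiple of $T$. I would write $n=mT+r$ with $0\le r<T$, split off the incomplete leading block, and bound its norm crudely by $\varkappa^{r}\le\varkappa^{T-1}$, where $\varkappa=\max_{A\in\setA,B\in\setB}\{1,\|AB\|\}<\infty$. Combining this with the exponential bound on the complete part and using $m\ge n/T-1$ gives $\|A_{n}\Bar{B}_{n}\cdots A_{1}\Bar{B}_{1}\|\le\varkappa^{T-1}C'(\lambda')^{m}\le C\lambda^{n}$ for suitable $C>0$ and $\lambda=(\lambda')^{1/T}\in(0,1)$, which is~\eqref{E:convergeEU}.

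I do not anticipate a serious obstacle here: in contrast to Theorem~\ref{T:main}, the sequence $\{\Bar{B}_{n}\}$ is now fixed, so no compactness or limiting argument over the factors $B_{n}$ is required. The only point demanding care is the verification that the per-block reduction is a genuine bijection onto all sequences over $\setD$ — that is, that the free choice of the $A_{n}$ within blocks reproduces exactly the arbitrary sequences from $\setD$ — since this is what licenses the direct appeal to the Theorem on Exponential Convergence.
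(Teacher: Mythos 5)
Your proposal is correct and follows essentially the same route as the paper: both collapse each period-length block into a matrix from the finite set $\setD$, observe that arbitrary sequences over $\setD$ correspond exactly to sequences over $\setA$ (so the stabilizability hypothesis transfers), invoke the Theorem on Exponential Convergence for $\setD$, and handle indices $n$ not divisible by the period via the crude bound $\varkappa^{T-1}$ on the incomplete block. The only cosmetic difference is that the paper extracts from that theorem merely a finite-step contraction $\|D_{k_*}\cdots D_1\|\le\mu<1$ and then reuses the block-decomposition argument from Theorem~\ref{T:main}, whereas you use the full exponential estimate directly; both are sound.
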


\begin{proof}
Denote by $p$ the period of the sequence $\{\Bar{B}_{n}\}$. Consider the set of $(N\times N)$-matrices:
\[
\setD=\{D=A_{p}\Bar{B}_{p}\cdots A_{1}\Bar{B}_{1}:~A_{1}\ldots A_{p}\in\setA\}.
\]

Since the set of matrices $\setA$ is finite, set $\setD$ is also finite. Moreover, by Definition~\ref{D:PIPS} of path-independent periodic stabilization,
\[
\|A_{np}\Bar{B}_{np}\cdots A_{1}\Bar{B}_{1}\|\to0\quad\text{as}\quad n\to\infty,
\]
for each sequence $\{A_{n}\in\setA\} $. Hence, for each sequence
$\{D_{n}\in\setD\}$, there is also
\[
\|D_{n}\cdots D_{1}\|\to0\quad\text{as}\quad n\to\infty.
\]
In this case, by Theorem A (on exponential convergence), there are $k_{*}>0$ and $\mu\in(0,1)$ such that
\[
\|D_{k_{*}}\cdots D_{1}\|\le\mu<1,\quad \forall~ D_{1}\ldots D_{k_{*}}\in\setD,
\]
or, equivalently,
\begin{equation}\label{E:last}
\|A_{k_{*}p}\Bar{B}_{k_{*}p}\cdots A_{1}\Bar{B}_{1}\|\le\mu<1,\quad \forall~A_{1}\ldots A_{k_{*}p}\in\setA.
\end{equation}

Further, repeating the proof of the corresponding part of Theorem~\ref{T:main} word for word, we derive from inequalities~\eqref{E:last} the existence of constants $C>0$ and $\lambda\in(0,1)$ such that for any sequence of matrices $\{A_{n}\in\setA\}$ inequalities~\eqref{E:convergeEU} hold.
\end{proof}

\section{Remarks and Open Questions}

First of all, we would like to make the following remarks.

\begin{remark}\label{R:0}
In the proof of Lemma~\ref{L:uniconv}, in fact, we used not the condition of path-dependent stabilizability of the matrix products~\eqref{E:matprod} but the weaker condition that for each matrix sequence $\{A_{n}\in\setA\}$ there exist a natural $k=k(\{A_{n}\})$ and a collection of matrices $B_{1},\ldots,B_{k}\in\setB$ for which equality~\eqref{E:p1} holds. Correspondingly, the statement of Theorem~\ref{T:main} is valid under weaker assumptions.

\begin{theorem}\label{T:mainX}
Let the sets of matrices $\setA$ and $\setB$ be such that for each matrix sequence $\{A_{n}\in\setA\}$ there are a natural $k$ and a collection of matrices $B_{1},\ldots,B_{k}\in\setB$ for which
\[
\|A_{k}B_{k}\cdots A_{1}B_{1}\|<1.
\]
Then there exist constants $C>0$ and $\lambda\in(0,1)$ such that for any sequence of matrices $\{A_{n}\in\setA\}$ there is a sequence of matrices $\{B_{n}\in\setB\}$ for which
\[
\|A_{n}B_{n}\cdots A_{1}B_{1}\|\le C\lambda^{n},\qquad n=1,2,\ldots\,.
\]
\end{theorem}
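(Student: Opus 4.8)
The plan is to observe that Theorem~\ref{T:mainX} requires essentially no new argument: its hypothesis is precisely the property that the proof of Lemma~\ref{L:uniconv} actually consumes, so that both Lemma~\ref{L:uniconv} and, through it, the exponential estimate of Theorem~\ref{T:main} survive the weakening of the assumptions intact. Accordingly, my proof would proceed by first re-establishing the conclusion of Lemma~\ref{L:uniconv} under the weaker hypothesis, and then invoking the block-decomposition estimate of Theorem~\ref{T:main} verbatim.

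First I would revisit the proof of Lemma~\ref{L:uniconv} line by line to confirm that full path-dependent stabilizability (Definition~\ref{D:PDS}) is never invoked. That argument opens by asserting, for each $\{A_{n}\in\setA\}$, the existence of a natural $k$ and matrices $B_{1},\ldots,B_{k}\in\setB$ with $\|A_{k}B_{k}\cdots A_{1}B_{1}\|<1$, which is word for word the hypothesis of Theorem~\ref{T:mainX}. It then sets $k(\{A_{n}\})$ to be the least such $k$, assumes for contradiction that these numbers are unbounded, forms the nested closed sets $\boldsymbol{A}_{k}\subset\setA^{\infty}$, and uses Tikhonov compactness together with the finite-intersection property to extract a sequence $\{\bar A_{n}\}$ all of whose finite alternating products satisfy $\|\bar A_{m}B_{m}\cdots\bar A_{1}B_{1}\|\ge 1$ for every choice of $B_{1},\ldots,B_{m}\in\setB$. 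This last statement contradicts the hypothesis of Theorem~\ref{T:mainX} exactly as it contradicted Definition~\ref{D:PDS}. Hence the conclusion of Lemma~\ref{L:uniconv} holds unchanged: there exist $k_{*}>0$ and $\mu\in(0,1)$ such that every $\{A_{n}\in\setA\}$ admits a $k\le k_{*}$ and matrices $B_{1},\ldots,B_{k}\in\setB$ with $\|A_{k}B_{k}\cdots A_{1}B_{1}\|\le\mu$.

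With this version of Lemma~\ref{L:uniconv} available, the second step is to rerun the proof of Theorem~\ref{T:main} without modification. Given $\{A_{n}\in\setA\}$, I would greedily partition the indices into consecutive blocks, applying the lemma to the current tail to obtain a block of length $\le k_{*}$ on which the alternating product has norm $\le\mu$; iterating produces indices $k_{1}<k_{2}<\cdots$ with $k_{m}\le k_{m-1}+k_{*}$ and factors $D_{i}$ of norm $\le\mu$, together with a short trailing factor $D_{*}$ bounded by $\varkappa^{k_{*}}$, where $\varkappa=\max_{A\in\setA,B\in\setB}\{1,\|AB\|\}<\infty$ by finiteness of $\setA$ and $\setB$. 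Since $k_{m}\le k_{*}m$ forces at least $p\ge n/k_{*}-1$ full blocks before index $n$, submultiplicativity yields $\|A_{n}B_{n}\cdots A_{1}B_{1}\|\le\varkappa^{k_{*}}\mu^{n/k_{*}-1}$, i.e.\ an estimate of the form $\|A_{n}B_{n}\cdots A_{1}B_{1}\|\le C\lambda^{n}$ with $C=\varkappa^{k_{*}}/\mu$ and $\lambda=\mu^{1/k_{*}}\in(0,1)$.

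The only point demanding genuine care, and hence the main obstacle, is the verification in the second step that the compactness machinery of Lemma~\ref{L:uniconv} rests on the weaker hypothesis alone. The delicate places are the closedness of each $\boldsymbol{A}_{k}$ and the nonemptiness of $\bigcap_{k}\boldsymbol{A}_{k}$: one must check that the final contradiction is drawn from the statement ``all finite alternating products have norm $\ge 1$,'' whose negation is supplied precisely by the hypothesis of Theorem~\ref{T:mainX} rather than by the stronger convergence-to-zero condition of Definition~\ref{D:PDS}. Once this is confirmed, no additional work remains, since the block-decomposition estimate never refers back to Definition~\ref{D:PDS} and therefore transfers immediately.
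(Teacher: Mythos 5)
Your proposal is correct and coincides with the paper's own argument: the paper proves Theorem~\ref{T:mainX} exactly by observing (in Remark~\ref{R:0}) that the proof of Lemma~\ref{L:uniconv} only ever uses the weaker hypothesis---the existence, for each sequence $\{A_{n}\in\setA\}$, of some finite product $A_{k}B_{k}\cdots A_{1}B_{1}$ of norm less than $1$---so that the lemma and then the block-decomposition proof of Theorem~\ref{T:main} apply verbatim. Your identification of where the contradiction is drawn (against ``all finite alternating products have norm $\ge 1$'') and your rerun of the estimate with $C=\varkappa^{k_{*}}/\mu$, $\lambda=\mu^{1/k_{*}}$ match the paper's reasoning exactly.
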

\end{remark}

\begin{remark}\label{R:1}\rm
All the above statements remain valid for the sets of matrices $\setA$ and $\setB$ with complex elements.
\end{remark}

\begin{remark}\label{R:2}\rm
Throughout the paper, in order to avoid inessential technicalities in proofs, it was assumed that the sets of matrices  $\setA$ and $\setB$ are finite. In fact, all the above statements remain valid in the case when the sets of matrices $\setA$ and $\setB$ are compacts, not necessarily finite, that is, are closed and precompact.
\end{remark}

Comparing the notions of path-dependent stabilizability and path-in\-depen\-dent periodic stabilizability, one can note that in the second of them the requirement of periodicity of the sequence $\{\Bar{B}_{n}\}$ stabilizing the matrix products~\eqref{E:matprod} appeared. Therefore, the following less restrictive concept of path-independent stabilizability seems rather natural.

\begin{definition}\label{D:PIS}
The matrix products~\eqref{E:matprod} are said to be \emph{path-independent stabilizable} by choosing the factors $\{B_{n}\}$ if there is a sequence of matrices $\{\Bar{B}_{n}\in\setB\}$ such that the convergence~\eqref{E:convergeU} holds for any sequence of matrices $\{A_{n}\in\setA\}$.
\end{definition}

It is not difficult to construct an example of the sets of square matrices in which the matrix products $A_{n}\Bar{B}_{n}\cdots A_{1}\Bar{B}_{1}$ converge slowly enough, slower than any geometric progression. For this it is enough to put $\setA=\{I\}$, $\setB=\{I,\lambda I\}$, where $\lambda\in(0,1)$, and define the sequence $\{\Bar{B}\}$ so that the matrix $\lambda I$ appears in it ``fairly rare'', at positions with numbers $k^{2}$, $k=1,2,\ldots\,$.

\begin{problem}
Let the matrix products~\eqref{E:matprod} be path-independent stabilizable by choosing a certain sequence of matrices $\{\Bar{B}_{n}\in\setB\}$. Is it possible in this case to specify a sequence of matrices $\{\Tilde{B}_{n}\in\setB\}$ $($possibly different from $\{\Bar{B}_{n}\in\setB\}$$)$ and constants $C>0$ and $\lambda\in(0,1)$ such that for any sequence of matrices $\{A_{n}\in\setA\}$ for all $n=1,2,\ldots$ the inequalities $\|A_{n}\Tilde{B}_{n}\cdots A_{1}\Tilde{B}_{1}\|\le C\lambda^{n}$ will be valid?
\end{problem}

Let us consider one more issue, which is adjacent to the topic under discussion. In the theory of matrix products, the following assertion is known~\cite{Koz:AiT90:10:e,DaubLag:LAA92,BerWang:LAA92,Gurv:LAA95,Hartfiel:02}: let $\setA$ be a finite set such that for each sequence of matrices $\{A_{n}\in\setA\}$  the sequence of norms $\{\|A_{n}\cdots A_{1}\|,~n=1,2,\ldots\}$ is bounded. Then all such sequences of norms for the matrices are uniformly bounded, that is, there exists a constant $C>0$ such that
\[
\|A_{n}\cdots A_{1}\|\le C,\qquad n=1,2,\ldots\,,
\]
for each sequence of matrices $\{A_{n}\in\setA\}$.

\begin{problem}
Let finite sets of matrices $\setA$ and $\setB$ be such that for each sequence of matrices $\{A_{n}\in\setA\}$ there is a sequence of matrices $\{B_{n}\in\setB\}$ for which the sequence of norms $\{\|A_{n}B_{n}\cdots A_{1}B_{1}\|,~n=1,2,\ldots\}$ is bounded. Does there exist in this case a constant $C>0$ such that for every matrix sequence $\{A_{n}\in\setA\}$ there is a sequence of matrices $\{B_{n}\in\setB\}$, for which the sequence of norms $\{\|A_{n}B_{n}\cdots A_{1}B_{1}\|,~n=1,2,\ldots\}$ is uniformly bounded, that is, for all $n=1,2,\ldots$  the inequalities $\|A_{n}B_{n}\cdots A_{1}B_{1}\|\le C$ hold?
\end{problem}

%\section*{Conflicts of Interest}
%The author declares that he has no conflicts of interest.

\section*{Acknowledgments}
The work was carried out at the Kharkevich Institute for Information Transmission Problems, Russian Academy of Sciences, and was funded by the Russian Science Foundation (Project no.~14-50-00150).

\bibliographystyle{elsarticle-num-no-extra-url}
\bibliography{AltFactors}
\end{document}